\newtheorem{theorem}{Theorem}[section]
\newtheorem{corollary}[theorem]{Corollary}
\newtheorem{lemma}[theorem]{Lemma}
\newtheorem*{definition*}{Definition}
\def\F{\mathbb{F}}
\def\Fp{\mathbb{F}_p}
\def\R{\mathbb{R}}
\def\RR{\mathcal{R}}
\def\S{\mathcal{S}}
\newcommand{\I}{\mathcal{I}}
\begin{document}
\title{Three-variable expanding polynomials and higher-dimensional distinct distances}

\author{Thang Pham\and Le Anh Vinh\and Frank de Zeeuw}
\date{}
\maketitle

\begin{abstract}
We determine which quadratic polynomials in three variables are expanders over an arbitrary field $\F$.
More precisely, 
we prove that for a quadratic polynomial $f\in \F[x,y,z]$, which is not of the form $g(h(x)+k(y)+l(z))$,
we have $|f(A\times B\times C)|\gg N^{3/2}$ for any sets $A,B,C\subset \F$ with $|A|=|B|=|C|=N$,
with $N$ not too large compared to the characteristic of $\F$.

We give several applications. 
We use this result for $f=(x-y)^2+z$
to obtain new lower bounds on $|A+A^2|$ and $\max\{|A+A|,|A^2+A^2|\}$, 
and to prove that a Cartesian product $A\times\cdots \times A\subset \F^d$ determines almost $|A|^2$ distinct distances if $|A|$ is not too large.
\end{abstract}

\section{Introduction}\label{sec:intro}
Let $\F$ be an arbitrary field. 
We use the convention that if $\F$ has positive characteristic, 
we denote the characteristic by $p$,
while if $\F$ has characteristic zero, 
we set $p=\infty$. 
Thus, a condition like $N<p^{2/3}$ is restrictive in positive characteristic, but vacuous in characteristic zero.
Although our focus is on finite fields, some of our results may be of interest in characteristic zero,
and this convention lets us state our results in a uniform way.
We use asymptotic notation: 
We write $X\ll Y$ if there exists a constant $C$, independent of any parameters implicit in $X$ and $Y$, such that $X\leq CY$; 
we write $X\gg Y$ if $Y\ll X$; and we write $X\approx Y$ if both $X\ll Y$ and $X\gg Y$.

Our aim is to study the expansion behavior of polynomials, i.e., to determine when the value set of a polynomial on any finite set is significantly larger than the input.
We wish to classify the polynomials that have this expanding property, and then to quantify the expansion.
The following definition captures this property. 

\begin{definition*}
A polynomial $f\in \F[x_1,\ldots,x_k]$ is an \emph{expander} if there are $\alpha>1,\beta>0$ such that for all sets $A_1,\ldots,A_k\subset\F$ of size $N\ll p^\beta$ we have 
\[|f(A_1\times \cdots \times A_k)|\gg N^\alpha.\]
\end{definition*}

Note that other sources may have slightly different definitions of expanders, but the essence is usually the same.
One distinctive aspect is that we allow the sets $A_i$ to be distinct; if one requires them to be the same, one obtains a strictly larger class of polynomials.
Also note that if $A$ is a subfield of $\F$ of size $N$, 
then $|f(A\times \cdots \times A)| = N$, so in positive characteristic we must have $\beta<1$.
In characteristic zero, $\beta$ plays no role.

In the wake of a recent result of Rudnev \cite{R} (see Theorem \ref{thm:rudnev}),
based on work of Guth and Katz \cite{GK},
several expansion bounds for polynomials over arbitrary fields have been improved.
Barak, Impagliazzo, and Wigderson \cite{BIW} had proved that $f=xy+z$ is an expander over any prime field $\Fp$, with an unspecified $\alpha>1$.
Roche-Newton, Rudnev, and Shkredov \cite{RRS} used \cite{R} to improve the exponent to $\alpha=3/2$ with $\beta=2/3$, over any field $\F$.
In other words, they proved
\[|AB+C|\gg N^{3/2}\]
for $A,B,C\subset\F$ with $|A|=|B|=|C|=N\ll p^{2/3}$.
Aksoy-Yazici et al. \cite{AMRS} proved the same for $f=x(y+z)$. 
There are similar results for expanders in more than three variables, 
but establishing two-variable expanders over finite fields seems to be considerably harder.
Essentially the only known example is $f(x,y)=x^2+xy$, which Bourgain \cite{B} proved to be an expander; Hegyv\'ari and Hennecart \cite{HH} generalized this to polynomials of the form $f(x)+x^kg(y)$ (with certain exceptions). 
Stevens and De Zeeuw \cite{SZ} improved the exponent for $x^2+xy$ to $\alpha=5/4$ with $\beta=2/3$, again using \cite{R}.

Over $\R$, expanders are better understood.
Elekes and R\'onyai \cite{ER} discovered that over $\R$ the two-variable expanders are exactly those polynomials $f(x,y)\in \R[x,y]$ that do not have the additive form $g(h(x)+k(y))$ or the multiplicative form $g(h(x)k(y))$.
Raz, Sharir, and Solymosi \cite{RSS} improved the exponent to $\alpha = 4/3$.
For three-variable polynomials, 
Schwartz, Solymosi, and De Zeeuw \cite{SSZ} 
proved that the only non-expanders over $\R$ have the form $g(h(x)+k(y)+l(z))$ or $g(h(x)k(y)l(z))$, 
and Raz, Sharir, and De Zeeuw \cite{RSZ} proved a quantitative version with $\alpha = 3/2$.

It is natural to conjecture that the same classification of expanders holds over arbitrary fields.
Bukh and Tsimerman \cite{BT} and Tao \cite{T} proved  results in this direction for two-variable polynomials on large subsets of finite fields,
but in general the expander question remains open for two-variable polynomials.
We use the result of Rudnev \cite{R} to make a first step towards classifying three-variable expanders over arbitrary fields, by determining which \emph{quadratic} polynomials are expanders.
The expanders $xy+z$ and $x(y+z)$, mentioned above, 
are special cases.
Note that for quadratic polynomials the exceptional form $g(h(x)k(y)l(z))$ does not occur (if the polynomial depends on each variable).

\begin{theorem}\label{thm:mainexpanding}
Let $f\in \F[x,y,z]$ be a quadratic polynomial that depends on each variable and that does not have the form $g(h(x)+k(y)+l(z))$.
Let $A, B,C\subset \F$ with $|A|=|B|=|C|=N$.
Then
\[|f(A\times B\times C)| \gg \min\left\{N^{3/2},p\right\}.\]
\end{theorem}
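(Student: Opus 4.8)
The plan is to bound a collision ``energy'' by a second-moment argument, and then to convert the hypothesis on $f$ into a genuinely three-dimensional point--plane incidence problem, to which Theorem~\ref{thm:rudnev} applies. Writing $S=f(A\times B\times C)$ and $r(s)=\#\{(a,b,c)\in A\times B\times C : f(a,b,c)=s\}$, the Cauchy--Schwarz inequality (together with $\sum_s r(s)=N^3$) gives
\[|S|\;\geq\;\frac{\big(\sum_s r(s)\big)^2}{\sum_s r(s)^2}\;=\;\frac{N^6}{E},\qquad\text{where}\qquad E = \#\{(a,b,c,a',b',c')\in (A\times B\times C)^2 : f(a,b,c)=f(a',b',c')\}.\]
It therefore suffices to show $E\ll N^{9/2}$ in the range $N^{3/2}\le p$, while tracking how the characteristic enters; as I explain below, it is exactly the ``$mn/p$'' term of Theorem~\ref{thm:rudnev} that caps the final bound at $p$.

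The role of the hypothesis is to make the resulting incidence problem nondegenerate. Away from characteristic $2$, the polarization identity $f(P)-f(P')=\nabla f\big(\tfrac{P+P'}{2}\big)\cdot(P-P')$ shows that each collision records that a difference vector is orthogonal to the gradient of $f$ at a midpoint. If $f$ had the additive form $g(h(x)+k(y)+l(z))$, then $\nabla f$ would be proportional to $(h',k',l')$ and would sweep out only a \emph{thin} family of directions (a single line through the origin when $g$ is quadratic and $h,k,l$ linear, or a product of three lines in the diagonal case), so the associated planes would essentially coincide and no expansion could be forced. The assumption that $f$ is \emph{not} of this form guarantees instead a genuine two-parameter family of plane normals, which is precisely the nondegeneracy Theorem~\ref{thm:rudnev} exploits. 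In particular, non-additivity rules out the diagonal shape and hence forces a nonzero mixed second derivative; after relabelling and rescaling the variables (operations that change $A,B,C$ only by affine images, preserving their sizes), I may assume the coefficient of $xy$ in $f$ is nonzero.

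With the $xy$-coupling in hand, I would linearise the collision equation into a point--plane incidence in $\F^3$, following the construction of Roche-Newton, Rudnev, and Shkredov \cite{RRS} for $AB+C$. There the six variables split cleanly into three \emph{point} coordinates and three \emph{plane} parameters, so that $ab+c=a'b'+c'$ becomes an incidence between the grid $B\times C\times A$ and a family of $\approx N^3$ planes; Theorem~\ref{thm:rudnev} then yields $E\ll N^{9/2}$, where the error term $mn/p\sim N^6/p$ produces the truncation at $p$, provided no line carries too many points. For a general $f$ the same split must also absorb the square terms $x^2,y^2,z^2$ and any remaining mixed terms $xz,yz$; these are exactly the monomials that do not fit a bilinear point--plane pairing, and accommodating them inside a three-dimensional incidence requires choosing the split according to which coefficients of $f$ vanish. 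I would organise this as a short case analysis on the quadratic part of $f$, in each case exhibiting an assignment of the variables (possibly after passing to difference variables, as in $ab-a'b'=(a-a')b+a'(b-b')$) that realises the collisions as incidences with a genuinely three-dimensional point set.

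I expect the main obstacle to be the degenerate term of Theorem~\ref{thm:rudnev}: the estimate is only useful once one bounds the number of points on a common line, equivalently the collisions arising from coincidences such as $a=a'$ or $b=b'$ and from lower-dimensional algebraic relations among the coordinates. Controlling this contribution by $\ll N$ per line is again where non-additivity is essential --- an additive $f$ would yield an over-concentrated, essentially two-dimensional configuration for which this term blows up and destroys the bound. Verifying uniformly across the cases that the hypothesis prevents such concentration, while carrying the characteristic bookkeeping so that the final estimate reads $E\ll N^{9/2}$ up to the $p$-truncation, should be the technical heart of the argument; the characteristic-two case, where polarization is unavailable, I would treat separately by arguing directly from the coefficients of $f$.
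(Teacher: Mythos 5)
Your high-level strategy coincides with the paper's: bound the collision energy $E$ via Cauchy--Schwarz and estimate $E$ by encoding collisions $f(a,b,c)=f(a',b',c')$ as point--plane incidences in $\F^3$, then apply Theorem~\ref{thm:rudnev}. But the proposal is a plan rather than a proof, and the steps you defer are precisely where the hypothesis on $f$ is consumed, so the gap is genuine. Most concretely: your reduction of non-additivity to ``the coefficient of $xy$ is nonzero'' is insufficient. The polynomial $f=(x+y+z)^2$ has all three mixed coefficients nonzero, so it passes your test, yet it has the forbidden form $g(h(x)+k(y)+l(z))$ and is not an expander. The paper therefore splits into two cases (some mixed term absent versus all three present) and, in the second case, invokes non-additivity a second time: after removing linear terms, not all of $4de=a^2$, $4dg=b^2$, $4eg=c^2$ can hold (else $f$ is the square of a linear form), and after permuting variables one assumes $4eg\neq c^2$. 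Your polarization heuristic never extracts this condition, and without it your argument would ``prove'' expansion for $(x+y+z)^2$.

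Relatedly, you locate the role of the hypothesis in the collinearity ($k$) term of Theorem~\ref{thm:rudnev}, but in the actual argument the conditions $4eg\neq c^2$ (and, in the other case, $t(z)$ non-constant, coming from ``$f$ depends on $z$'') are used earlier, to show that each point of $\RR$ and each plane of $\S$ arises from $O(1)$ triples, so that $|\RR|,|\S|\approx N^3$ and $\I(\RR,\S)\approx E$; without bounded multiplicity the incidence bound says nothing about $E$. The explicit splits --- e.g.\ points $(x,y',\,bxz+r(x)+t(z)-s(y'))$ against planes $ayX-ax'Y+Z=\cdots$ when the $yz$ term is absent, which interleave primed and unprimed variables rather than following the $AB+C$ encoding verbatim --- and the collinearity bounds (projection to $A\times B$ in one case; covering $\RR$ by the planes $x=x_0$ and noting the points there lie on conics in the other) constitute the real content of the proof and are absent from your proposal. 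Finally, the version of Rudnev's theorem used here has no $mn/p$ error term; the truncation at $p$ comes from shrinking the sets until $|A||B||C|\ll p^2$ before applying the incidence bound, not from an error term in it.
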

In terms of our definition, 
this theorem says that if a quadratic $f\in \F[x,y,z]$ does not have the multiplicative form $g(h(x)+k(y)+l(z))$, 
then it is an expander 
with $\alpha = 3/2$ and $\beta = 2/3$.
The theorem also gives expansion for $2/3<\beta<1$, with $\alpha$ shrinking as $\beta$ approaches $1$.

\paragraph{Consequences.}
One new expander included in our theorem is $f(x,y,z) = (x-y)^2+z$;
all our applications rely on this special case of our main theorem.

We will show that we can use this expander to obtain a new bound on the expression $|A+A^2|$.
This expression was first considered by Elekes, Nathanson, and Ruzsa \cite{ENR},
who observed that it has an expansion-like property, even though $f(x,y) = x+y^2$ is not an expander in the definition above (one could call it a ``weak expander'').
They showed that $|A+A^2|\gg |A|^{5/4}$ for $A\subset \R$,
and the exponent was improved by Li and Roche-Newton \cite{LR} to $24/19$ (up to a logarithmic factor in the bound).
For $A\subset \F_p$, Hart, Li, and Shen \cite{HLS} proved that $|A+A^2|\gg|A|^{147/146}$ for $|A|\ll p^{1/2}$,
which was improved by Aksoy Yazici et al. \cite{AMRS} to 
$|A+A^2|\gg |A|^{11/10}$ for $|A|\ll p^{5/8}$.
Here we improve this bound further.

\begin{theorem}\label{thm:A+A^2}
For $A\subset \mathbb{F}$ with $|A|\ll p^{5/8}$ we have 
\[|A+A^2|\gg |A|^{6/5}.\]
\end{theorem}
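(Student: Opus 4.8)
The plan is to reduce the bound to an estimate for the additive energy of $A+A^2$ and then to feed in the expander of Theorem~\ref{thm:mainexpanding} for $f=(x-y)^2+z$. Writing $A^2=\{a^2:a\in A\}$ and $S=A+A^2$, for each $s\in S$ set $R(s)=\#\{(a,b)\in A\times A: a+b^2=s\}$, so that $\sum_{s}R(s)=|A|^2$. By the Cauchy--Schwarz inequality,
\[
|A+A^2|\ge \frac{\bigl(\sum_{s}R(s)\bigr)^2}{\sum_{s}R(s)^2}=\frac{|A|^4}{E},
\]
where $E:=\#\{(a,b,c,d)\in A^4: a+b^2=c+d^2\}$ is the energy. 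Hence it suffices to prove the energy bound $E\ll |A|^{14/5}$ in the range $|A|\ll p^{5/8}$, since then $|A+A^2|\ge |A|^4/E\gg |A|^{6/5}$, as desired.

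Next I would rewrite the energy so that the expander can act on it. The equation $a+b^2=c+d^2$ is equivalent to $a-c=d^2-b^2=(d-b)(d+b)$, so $E$ counts quadruples in which an element of $A-A$ is represented as the product $(d-b)(d+b)$. The engine is Theorem~\ref{thm:mainexpanding} applied to $f=(x-y)^2+z$ on the sets $A,A,A$: since $f$ is quadratic, depends on each variable, and is not of the form $g(h(x)+k(y)+l(z))$, we get
\[
\bigl|\{(a-b)^2+c:a,b,c\in A\}\bigr|\gg \min\{|A|^{3/2},p\}\gg |A|^{3/2},
\]
the last step using $|A|\ll p^{5/8}\ll p^{2/3}$. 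The point is that this strong expansion forces the differences $d^2-b^2$ to be well spread out, limiting how often they can coincide with a fixed value of $a-c$. Concretely, I would dyadically sort the values $t\in A-A$ by the number of representations $t=(d-b)(d+b)$, run a popularity/Cauchy--Schwarz argument on each dyadic level, and sum the contributions, using the Plünnecke--Ruzsa inequality to control the auxiliary sumsets and difference sets of $A$ and $A^2$ that appear and using the expansion bound above to cap the dominant term. Optimising the dyadic decomposition is what should produce both the exponent $14/5$ and the admissible range $|A|\ll p^{5/8}$.

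The main obstacle is precisely this last transfer. Because $(a-b)^2=a^2-2ab+b^2$ carries the cross term $2ab$, the image $\{(a-b)^2+c\}$ is \emph{not} contained in a bounded number of translates of $A+A^2$, so one cannot pass from the expander bound to the energy estimate by a soft Plünnecke--Ruzsa covering alone; the product structure has to be absorbed by an honest second-moment argument. Making the numerology land exactly on $E\ll |A|^{14/5}$, rather than a weaker exponent, while keeping the characteristic constraint as weak as $|A|\ll p^{5/8}$, is the delicate point, and it is here that the full strength of the $|A|^{3/2}$ expansion in Theorem~\ref{thm:mainexpanding} is needed.
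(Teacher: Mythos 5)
Your opening reduction is fine as far as it goes: with $R(s)=\#\{(a,b)\in A\times A: a+b^2=s\}$, Cauchy--Schwarz does give $|A+A^2|\ge |A|^4/E$ with $E=E_+(A,A^2)=\#\{a+b^2=c+d^2\}$, so the theorem would follow from $E\ll |A|^{14/5}$. But that energy estimate is the entire content of the theorem, and you do not prove it; the paragraph about dyadic pigeonholing, popularity arguments, and Pl\"unnecke--Ruzsa is a description of a hoped-for argument, not an argument, and you yourself flag the key transfer as the ``delicate point.'' There is also a structural reason the proposed transfer cannot work as stated: Theorem \ref{thm:mainexpanding} applied to $f=(x-y)^2+z$ on $A\times A\times A$ is a lower bound on the \emph{size of an image set} (a zeroth-moment statement), and such a bound does not control a \emph{second moment} like $E$ --- a function can have a large image while its multiplicity function is still concentrated on a few popular values. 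So citing the expander as a black box gives you nothing about $E$, and no known bound of the shape $E_+(A,A^2)\ll|A|^{14/5}$ is available in this setting (indeed, even over $\R$ a bound as strong as $E\ll|A|^{5/2}$ would already imply $|A+A^2|\gg|A|^{3/2}$, far beyond what is known).

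The paper's proof works with a different second moment and, crucially, does not use Theorem \ref{thm:mainexpanding} as a black box. The key observation you are missing is that for every $(a,b,c)\in A^3$ the point $x=a+b^2$, $y=b^2$, $z=c$, $t=a^2+c$ solves $(x-y)^2+z=t$ with $x,t\in A+A^2$, $y\in A^2$, $z\in A$; this plants $|A|^3$ solutions on the asymmetric grid $(A+A^2)\times A^2\times A\times(A+A^2)$, and Cauchy--Schwarz then gives $|A|^6/|A+A^2|\le E'$, where $E'$ is the number of solutions of $(x-y)^2+z=(x'-y')^2+z'$ on $((A+A^2)\times A^2\times A)^2$. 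One then reruns the Rudnev point--plane incidence argument from Lemma \ref{lem:expandingwithoutyz} on these unequal sets to get $E'\ll |A+A^2|^{3/2}|A|^3+|A+A^2|^2|A|^2$ (the hypothesis $|A|\ll p^{5/8}$ enters exactly to guarantee $|\RR|\ll p^2$ after assuming $|A+A^2|\ll|A|^{6/5}$), and comparing the two bounds on $E'$ yields $|A+A^2|\gg|A|^{6/5}$. Without this ``plant $|A|^3$ solutions and bound the energy of the three-variable form on the enlarged grid'' mechanism, your outline does not close.
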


A closely related expression is $|A^2+A^2|$,
for which there are expansion-like bounds that are conditional on $|A+A|$ being small. 
Over $\R$, 
 \cite{ENR} proved $\max\left\{|A+A|, |A^2+A^2|\right\}\gg |A|^{5/4}$,
and the exponent was improved to $24/19$ in \cite{LR} (up to logarithms).
Over $\F_p$, \cite{BT} proved a quantitatively weaker version,
and \cite{AMRS} proved that 
$\max\left\{|A+A|, |A^2+A^2|\right\}\gg |A|^{8/7}$ for $|A|\ll p^{3/5}$.
We improve this bound as well.

\begin{theorem}\label{thm:A^2+A^2}
For $A\subset \mathbb{F}$ with $|A|\ll p^{5/8}$ we have 
\[\max\left\{|A+A|, |A^2+A^2|\right\}\gg |A|^{6/5}.\]
\end{theorem}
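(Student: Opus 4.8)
\emph{Proof proposal.}
The plan is to follow the strategy behind Theorem~\ref{thm:A+A^2}, built on the same expander $f(x,y,z)=(x-y)^2+z$ and on Rudnev's incidence bound. Write $K=\max\{|A+A|,|A^2+A^2|\}$ and suppose both $|A+A|\le K$ and $|A^2+A^2|\le K$; the aim is to force $K\gg N^{6/5}$. By Cauchy--Schwarz it suffices to bound the additive energy of the squares,
\[
E:=\bigl|\{(a,b,c,d)\in A^4:\ a^2+b^2=c^2+d^2\}\bigr|,
\]
since $|A^2+A^2|\ge N^4/E$. The identity $2ab=(a+b)^2-a^2-b^2$ is what connects the two sumsets: it shows that once $A+A$ and $A^2+A^2$ are both small the product set is constrained too, and it is the reason the cross term of $(x-y)^2$ can be handled. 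As a first step I would invoke the Pl\"unnecke--Ruzsa inequalities to turn $|A+A|\le K$ into $|A-A|\ll K^2/N$, so that every quantity $a\pm c$, $b\pm d$ occurring below lies in a set of size $\ll K^2/N$.

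The core of the argument is an upper bound on $E$. Rewriting the defining equation as $a^2-c^2=d^2-b^2$, that is
\[
(a-c)(a+c)=(d-b)(d+b),
\]
displays $E$ as the number of solutions of a multiplicative equation whose four factors range over $A\pm A$ --- the very multiplicative structure that makes $(x-y)^2+z$ an expander. This is the kind of count handled by Rudnev's point--plane incidence theorem (Theorem~\ref{thm:rudnev}), which also drives Theorem~\ref{thm:mainexpanding}. I would encode the solutions as incidences between an explicit point set and a family of planes in $\F^3$ assembled from $A$ and $A\pm A$, and then insert the sizes coming from $|A+A|\le K$ and $|A-A|\ll K^2/N$. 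Optimizing should produce a bound of the form $E\ll K\,N^{8/5}$; combined with $E\ge N^4/|A^2+A^2|\ge N^4/K$ this yields $N^4/K\ll K N^{8/5}$, and hence $K\gg N^{6/5}$.

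The principal obstacle is precisely this cross term $ab$: it is invisible to purely additive sumset inequalities --- estimates of the shape $|X+Y|\le|X|\,|Y|$ are far too wasteful and lead only to exponents well short of $6/5$ --- so its multiplicative nature must be exploited through the incidence bound rather than combinatorially. The second subtle point is the admissible range: Rudnev's theorem contributes a useful main term only while the governing point set, of some fixed power of $N$, stays below $p^2$ in size, and tracking this is what yields the threshold $N\ll p^{5/8}$, just as in Theorem~\ref{thm:A+A^2}. Lastly I would treat the degenerate cases separately --- chiefly when $|A^2|\ll N$, so that squaring identifies many pairs of elements of $A$ --- since these are easy but formally fall outside the generic count.
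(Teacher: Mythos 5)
Your overall architecture (Cauchy--Schwarz against an energy, Rudnev for the energy bound, $p^{5/8}$ coming from the $|\RR|\ll p^2$ constraint) matches the paper in spirit, but the specific energy you chose to bound is the wrong one, and the step where you claim $E\ll K N^{8/5}$ for $E=\bigl|\{(a,b,c,d)\in A^4: a^2+b^2=c^2+d^2\}\bigr|$ is a genuine gap. The difficulty is structural: after any split of the four variables into a ``point'' pair and a ``plane'' pair, the equation $a^2+b^2=c^2+d^2$ (equivalently $a^2-c^2=d^2-b^2$, or $(a-c)(a+c)=(d-b)(d+b)$) contains \emph{no cross term} between the two groups --- the left side involves only $a,c$ and the right side only $b,d$. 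Rudnev's theorem needs a genuinely bilinear term such as $ayX-ax'Y$ to define non-degenerate planes; with a fully separated equation the ``planes'' collapse to $Z=\mathrm{const}$ and the incidence bound gives nothing. Your factorization $(a-c)(a+c)=(d-b)(d+b)$ just re-expresses $E$ as the multiplicative energy of the value multiset $\{a^2-c^2\}$, which is circular, and relaxing the factors to free elements of $A\pm A$ (which would restore an incidence structure) over-counts far too much to reach $N^{14/5}$. No bound of the strength $E\ll K N^{8/5}$ for the additive energy of the squares is established by your argument, nor is one available off the shelf; indeed such a bound at $K\approx N^{6/5}$ would be only barely below the trivial $E\ll N^3$, yet nothing you wrote produces it.

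The paper's proof sidesteps $E^{+}(A^2)$ entirely. It writes $a^2+c^2=(x-y)^2+z$ with $x=a+b\in A+A$, $y=b\in A$, $z=c^2\in A^2$, so that
\[
|A|^3\le\bigl|\{(x,y,z,t)\in (A+A)\times A\times A^2\times(A^2+A^2):(x-y)^2+z=t\}\bigr|,
\]
and then applies Cauchy--Schwarz to get $|A|^6/|A^2+A^2|\le E'$, where $E'$ is the six-variable energy of $(x-y)^2+z$ on $(A+A)\times A\times A^2$. The auxiliary variable $y=b$ is exactly what creates the cross term $-2xy$ that feeds the point--plane setup of Lemma~\ref{lem:expandingwithoutyz}, yielding $E'\ll |A+A|^{3/2}|A|^3$ and hence $|A+A|^3|A^2+A^2|^2\gg|A|^6$. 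This substitution --- trading the four-variable energy of $A^2$ for a six-variable energy over $(A+A)\times A\times A^2$ that \emph{does} have bilinear structure --- is the idea your proposal is missing; without it, or without some other concrete and non-degenerate incidence construction for $E$, the argument does not close. (The Pl\"unnecke--Ruzsa step and the separate treatment of degenerate cases are harmless but also unnecessary in the paper's route.)
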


It is worth noting that the bounds in Theorems \ref{thm:A+A^2} and \ref{thm:A^2+A^2}
are numerically the same as the best known bounds for $\max\{|A+A|,|A\cdot A|\}$ \cite{RRS} and $|A\cdot (A+1)|$ \cite{SZ};
in each case the lower bound is $|A|^{6/5}$ under the condition $|A|\ll p^{5/8}$.

Our expansion bound for $f(x,y,z) = (x-y)^2+z$ also allows us to give inductive proofs of expansion bounds for the algebraic distance function in any number of variables.
This idea is due to Hieu and Vinh \cite{HV} and Vinh \cite{V}, who used it to prove expansion bounds on large subsets of finite fields.
Given $P\subset \F^d$, we define its \emph{distance set} by
\[\Delta(P) = \left\{\sum_{i=1}^d(x_i-y_i)^2: (x_1,\ldots, x_d),(y_1,\ldots,y_d)\in P\right\}\]
Obtaining good expansion bounds for this function in $\R^2$ is the well-known \emph{distinct distance problem} of Erd\H os \cite{E}, 
which is a central question in combinatorial geometry.
Here we prove a general bound for the number of distinct distances determined by a higher-dimensional Cartesian product.
Note that as $d$ increases this bound converges to $|A|^2$ (up to constants).

\begin{theorem}\label{thm:maindistances}
For $A\subset \F$ we have 
\[\left|\Delta(A^d)\right|
\gg  \min\left\{|A|^{2-\frac{1}{2^{d-1}}},p\right\}.\]
\end{theorem}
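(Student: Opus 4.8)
The plan is to induct on the dimension $d$, using the expander $f(x,y,z)=(x-y)^2+z$ to peel off one coordinate at a time. The starting observation is that, since $A^d$ is a Cartesian product, the distance set factors as
\[\Delta(A^d)=\left\{(x-y)^2+s: x,y\in A,\ s\in \Delta(A^{d-1})\right\}=f\bigl(A\times A\times \Delta(A^{d-1})\bigr),\]
where I take $x,y$ to be the $d$-th coordinates and let $s$ run over the contribution of the first $d-1$ coordinates. Thus each step of the induction reduces a $d$-dimensional distance set to a single application of the three-variable expander $f$, with the first two slots equal to $A$ and the third slot equal to the $(d-1)$-dimensional distance set.

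For the base case $d=1$ I would note that $\Delta(A)=\{(x-y)^2:x,y\in A\}$ has size $\gg \min\{|A|,p\}$: fixing one coordinate $x_0$, the map $y\mapsto(x_0-y)^2$ is at most two-to-one, so $\Delta(A)$ already has $\gg |A|$ elements (capped at the number of squares, which is $\approx p$). This matches the claimed exponent $2-1/2^{d-1}=1$ at $d=1$. For the inductive step, write $N=|A|$ and assume $|\Delta(A^{d-1})|\gg \min\{N^{\delta_{d-1}},p\}$ with $\delta_{d-1}=2-1/2^{d-2}$. If $|\Delta(A^{d-1})|\gg p$ we are done, since $\Delta(A^{d-1})\subseteq \Delta(A^d)$ (set the $d$-th coordinates equal) forces $|\Delta(A^d)|\gg p$. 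Otherwise I would feed $C=\Delta(A^{d-1})$, of size $\approx N^{\delta_{d-1}}$, into an \emph{unbalanced} version of the expander bound of the shape
\[\bigl|f(A\times A\times C)\bigr|\gg \min\bigl\{N\,|C|^{1/2},\,p\bigr\}.\]
Substituting $|C|\approx N^{\delta_{d-1}}$ yields $|\Delta(A^d)|\gg \min\{N^{1+\delta_{d-1}/2},p\}$, i.e. the recursion $\delta_d=1+\delta_{d-1}/2$, which with $\delta_1=1$ solves to $\delta_d=2-1/2^{d-1}$, exactly the stated exponent.

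The main obstacle is that the estimate needed in the inductive step is \emph{not} literally Theorem \ref{thm:mainexpanding}: that theorem is balanced ($|A|=|B|=|C|=N$) and, applied to a size-$N$ subset of $\Delta(A^{d-1})$, would only ever return $N^{3/2}$, which does not improve with $d$. The entire gain comes from exploiting that the third set $C=\Delta(A^{d-1})$ is much larger than $A$, so I must establish the unbalanced bound above, in which the large set contributes a factor $|C|^{1/2}$. I expect this to follow from the same machinery behind Theorem \ref{thm:mainexpanding}, namely Rudnev's point-plane incidence bound \cite{R}. Concretely, by Cauchy--Schwarz it suffices to show that the number of solutions of $(x-y)^2+z=(x'-y')^2+z'$ with $x,y,x',y'\in A$ and $z,z'\in C$ is $\ll N^3|C|^{3/2}$; rewriting the equation as $(x-y-x'+y')(x-y+x'-y')=z'-z$ exhibits it as an incidence problem between points and planes in $\F^3$, to which \cite{R} applies (and is where the condition on $N$ relative to $p$ enters). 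Verifying that this incidence count is of the correct order in both $N$ and $|C|$, and that the degenerate locus where a plane captures too many points does not dominate, is the technical heart of the argument; the induction itself is then routine.
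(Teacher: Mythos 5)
Your proposal is correct and is essentially the paper's own argument: the same induction on $d$ peeling off one coordinate via $f(x,y,z)=(x-y)^2+z$, with the gain coming from an unbalanced expander bound $|f(A\times A\times C)|\gg\min\{|A||C|^{1/2},p\}$ proved by an energy/Cauchy--Schwarz argument and Rudnev's point-plane incidence theorem. The unbalanced estimate you identify as the technical heart is exactly the paper's Lemma \ref{lem:expandingwithoutyz} (specialized as Corollary \ref{cor:g(AxA)+C}), which is established before the theorem and then fed into the same recursion $\delta_d=1+\delta_{d-1}/2$.
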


For $d=2$ we recover the result of Petridis \cite{P} 
that $|\Delta(A^2)|\gg \min\{|A|^{3/2},p\}$, 
which is the current best bound for distinct distances on Cartesian products over general fields.
For large subsets of prime fields, 
we recover a result of Hieu and Vinh \cite{HV}.

Finally, 
we consider $\F=\R$, 
which is of course the field for which Erd\H os \cite{E} introduced the distinct distances problem.
He observed that for $A = \{1,\ldots, N\}$ we have 
$|\Delta(A^2)|\ll |A|^2/\sqrt{\log|A|}$
and
$|\Delta(A^d)|\ll |A|^2 =(|A^d|)^{2/d}$ for $d\geq 3$.
He later conjectured 
that these bounds are optimal for arbitrary point sets,
i.e., that $|\Delta(P)|\gg |P|/\sqrt{\log |P|}$ for all $P\subset \R^2$,
and $|\Delta(P)|\gg |P|^{2/d}$ for all $P\subset \R^d$ with $d\geq 3$.
Guth and Katz \cite{GK} almost solved this for $d=2$, by proving that 
\begin{equation}\label{eq:GK}
|\Delta(P)|\gg |P|/\log |P|
\end{equation}
for any $P\subset \R^2$.
For $d\geq 3$, the best lower bound is due to Solymosi and Vu \cite{SV}.
It is roughly speaking of the form 
\[|\Delta(P)|\gg |P|^{\frac{2}{d} - \frac{1}{d^2}};\]
see Sheffer \cite{S} for the exact expression (incorporating \cite{GK}). 

It follows from \cite{GK} that for any $A\subset\R$ we have $|\Delta(A^d)|\gg|A|^2/\log|A|$, since the set $(A-A)^2+(A-A)^2$ is contained in any set of the form $(A-A)^2+\cdots +(A-A)^2$.
By taking the distinct distance bound of \cite{GK} as the base case for the inductive argument with $(x-y)^2+z$ that we used to prove Theorem \ref{thm:maindistances}, 
we obtain an improvement on the exponent of the logarithm.

\begin{theorem}\label{thm:realdistances}
For $A\subset \R$ and $d\geq 2$ we have 
\[\left|\Delta(A^d)\right|
\gg \frac{|A|^2}{\log^{1/2^{d-2}}|A|}.\]
\end{theorem}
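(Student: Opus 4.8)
The plan is to re-run the inductive argument behind Theorem \ref{thm:maindistances}, but to feed it the Guth--Katz bound \eqref{eq:GK} as the base case in place of the weaker base case used there. Writing $N=|A|$, the starting observation is that the distance set splits off its last coordinate,
\[\Delta(A^d) = (A-A)^2 + \Delta(A^{d-1}),\]
because $\sum_{i=1}^d(x_i-y_i)^2$ decomposes as $\sum_{i=1}^{d-1}(x_i-y_i)^2 + (x_d-y_d)^2$. The expander $f(x,y,z)=(x-y)^2+z$ governs exactly this sum, and the mechanism used for Theorem \ref{thm:maindistances} yields, over any field, the inductive step $|\Delta(A^d)| \gg \min\{N\,|\Delta(A^{d-1})|^{1/2},\,p\}$. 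Over $\R$ the cap at $p$ is vacuous, so the step reads $|\Delta(A^d)| \gg N\,|\Delta(A^{d-1})|^{1/2}$.

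To recover where this step comes from, I would set $D=\Delta(A^{d-1})$ and count the representations of the elements of $(A-A)^2+D$ over the product $A\times A\times D$, of which there are $N^2|D|$ in total. Cauchy--Schwarz then gives
\[|\Delta(A^d)| = |(A-A)^2 + D| \geq \frac{(N^2|D|)^2}{E},\]
where $E$ counts the solutions of $(a-b)^2+s=(a'-b')^2+s'$ with $a,b,a',b'\in A$ and $s,s'\in D$. The content of the expander for $(x-y)^2+z$ is an energy estimate of the form $E\ll N^3|D|^{3/2}$, which in the balanced case $|D|=N$ reduces to the bound $N^{9/2}$ underlying Theorem \ref{thm:mainexpanding}; substituting it produces $|\Delta(A^d)|\gg N|D|^{1/2}$, as claimed.

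It then remains to solve the recurrence over $\R$. For $d=2$, taking $P=A\times A$ in \eqref{eq:GK} gives
\[|\Delta(A^2)| \gg \frac{|A|^2}{\log(|A|^2)} \approx \frac{N^2}{\log N}.\]
Writing $|\Delta(A^d)|\gg N^2/\log^{e_d}N$, the base case fixes $e_2=1$, and each inductive step halves the exponent: if $|\Delta(A^{d-1})|\gg N^2/\log^{e_{d-1}}N$, then
\[|\Delta(A^d)| \gg N\left(\frac{N^2}{\log^{e_{d-1}}N}\right)^{1/2} = \frac{N^2}{\log^{e_{d-1}/2}N},\]
so $e_d=e_{d-1}/2$ and hence $e_d=1/2^{d-2}$, which is exactly the claimed bound. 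The pleasant point is that the square root in the inductive step acts on the logarithmic factor as well as on $N$, so the single logarithm of Guth--Katz is progressively damped; moreover the product of the implied constants converges (each step contributes only a square root of the previous constant), so the implied constant may be taken independent of $d$.

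The main obstacle is the inductive step itself, and specifically the unbalanced energy estimate $E\ll N^3|D|^{3/2}$ for the equation $(a-b)^2+s=(a'-b')^2+s'$ in the regime where $D$ is far larger than $A$ (here $|D|$ can be as large as $N^2$). In the balanced regime this is precisely the engine of Theorem \ref{thm:mainexpanding}, so the one technical point to confirm is that it survives the passage to unequal set sizes with $D$ an arbitrary finite subset of $\R$; granting this, and granting \eqref{eq:GK}, the theorem follows from the elementary recurrence above.
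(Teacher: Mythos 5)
Your proposal is correct and follows essentially the same route as the paper: induction on $d$ with the Guth--Katz bound \eqref{eq:GK} as the base case, and the inductive step $|\Delta(A^{d+1})|\gg|A|\,|\Delta(A^d)|^{1/2}$ coming from the expander $(x-y)^2+z$ applied to the decomposition $\Delta(A^{d+1})=(A-A)^2+\Delta(A^d)$. The one point you flag as needing confirmation --- the energy bound $E\ll N^3|D|^{3/2}$ for unbalanced sets with $|D|$ much larger than $|A|$ --- is exactly what Lemma \ref{lem:expandingwithoutyz} (stated for arbitrary $|A|,|B|\leq|C|$) and its specialization Corollary \ref{cor:g(AxA)+C} provide, so your re-derivation of the step via Cauchy--Schwarz is just an unfolding of the corollary the paper cites.
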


We note that this theorem can also be proved without Rudnev's theorem \cite{R},
by using only the Szemer\'edi--Trotter theorem \cite{ST} and the Guth--Katz bound \cite{GK}; see Section \ref{sec:conseqs}.

\section{Three-variable expanding polynomials}

Our main tool is a point-plane incidence bound of Rudnev \cite{R}.
We use the following slightly strengthened version proved by De Zeeuw \cite{Z} (and our proof relies on this strengthening).
We write $\I(\RR,\S) = |\{(r,s)\in \RR\times \S : r\in s\}|$ for the number of \emph{incidences} of $\RR$ and $\S$.

\begin{theorem}[Rudnev]\label{thm:rudnev}
Let $\RR$ be a set of points in $\F^3$ and let $\S$ be a set of planes in $\mathbb{F}^3$, with $|\RR|\ll |\S|$ and $|\RR|\ll p^2$.
Suppose that there is no line that contains $k$ points of $\RR$ and is contained in $k$ planes of $\S$.
Then
\[ \I(\RR,\S)\ll |\RR|^{1/2}|\S| +k|\S|.\] 
\end{theorem}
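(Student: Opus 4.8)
The plan is to follow the geometric approach of Rudnev, reducing the point--plane incidence count to a statement about intersections of lines in three-dimensional projective space, where the Guth--Katz polynomial method can be brought to bear. First I would pass to $\mathbb{P}^3(\F)$ and apply a projective transformation to put $\RR$ and $\S$ in general position (discarding the bounded number of points and planes lying in any fixed hyperplane at infinity, at the cost of constants), so that point--plane incidences in $\F^3$ and in $\mathbb{P}^3$ agree up to the allowed error. The condition $|\RR|\ll p^2$ is carried throughout, so that the algebraic geometry over $\F$ (degrees of partitioning polynomials, classification of ruled surfaces) behaves as it does over $\C$.

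The core step is to encode incidences as line intersections via the Pl\"ucker/Klein correspondence. Lines in $\mathbb{P}^3$ correspond to points of the Klein quadric $\mathcal{K}\subset\mathbb{P}^5$, two lines meeting in $\mathbb{P}^3$ exactly when the corresponding points of $\mathcal{K}$ are conjugate under the defining bilinear form, and points and planes of $\mathbb{P}^3$ correspond to the two rulings of $\mathcal{K}$. Concretely, an incidence $r\in s$ is a symmetric bilinear relation $\langle r,s\rangle=1$ between a point and the dual point of a plane, and this relation can be read off as a transversality condition on two associated lines. I would build from $\RR$ a family $\mathcal{L}_1$ and from $\S$ a family $\mathcal{L}_2$ of lines in $\mathbb{P}^3$ so that each incidence becomes exactly one intersecting pair $(\ell_1,\ell_2)\in\mathcal{L}_1\times\mathcal{L}_2$. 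Counting incidences then reduces to bounding the number of intersecting pairs among a set of $\approx|\RR|+|\S|$ lines in $\mathbb{P}^3(\F)$.

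To bound intersecting pairs I would invoke the Guth--Katz line machinery in the finite-field regime: for $L$ lines in $\mathbb{P}^3(\F)$, once at most $\sqrt{L}$ of them lie in a common plane or pass through a common point (the regulus/pencil non-degeneracy), the number of pairwise intersections is $O(L^{3/2})$, plus the contribution of the degenerate sub-families. The hypothesis that no line carries $k$ points of $\RR$ while lying in $k$ planes of $\S$ is exactly what controls these degenerate families after the Klein transfer: a line of $\mathbb{P}^3$ through many mutual intersection points of $\mathcal{L}_1$ and $\mathcal{L}_2$ pulls back to such a $k$-rich, $k$-covered line, and isolating its contribution yields the additive error $k|\S|$. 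Rebalancing the two families using $|\RR|\ll|\S|$ then produces the main term $|\RR|^{1/2}|\S|$.

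I expect the main obstacle to lie in the polynomial-partitioning estimate for lines over $\F$ together with the degenerate-configuration bookkeeping. Over $\F$ one must ensure (using $|\RR|\ll p^2$) that a low-degree partitioning polynomial exists, that lines not contained in its zero set cross only a controlled number of cells, and that the exceptional lines lying on the zero surface are governed by the classification of ruled surfaces and reguli --- all of which can fail once the characteristic is small relative to the point count, and this is precisely where Koll\'ar-type degree bounds and the $p^2$ restriction are essential. The delicate part of the strengthening of de Zeeuw is to replace separate caps on points-per-line and planes-per-line by the single combined condition in the statement, and to extract the clean error term $k|\S|$ rather than a weaker or logarithmically lossy one; carrying the Klein-quadric conjugacy condition faithfully through this final accounting is where I would concentrate the effort.
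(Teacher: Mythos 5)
The paper offers no proof of this statement at all---it is imported as a black box from Rudnev \cite{R}, in the strengthened form due to de Zeeuw \cite{Z}---so the only comparison available is with that external argument, and your outline reproduces it essentially step for step: the Pl\"ucker/Klein transfer turning each point--plane incidence into an intersecting pair of lines in $\mathbb{P}^3$, the Guth--Katz bound of $O(L^{3/2})$ on intersecting pairs of $L$ lines (valid over $\F$ for $L\ll p^2$, which is where the Koll\'ar-type degree bounds enter), the splitting of the larger family $\S$ into blocks of size $\approx|\RR|$ to convert $L^{3/2}$ into the unbalanced main term $|\RR|^{1/2}|\S|$, and the $k$-rich, $k$-covered line hypothesis absorbing the degenerate line families into the error $k|\S|$. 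Two cosmetic corrections to your sketch: projective point--plane incidence is the annihilation condition $\langle r,s\rangle=0$, not $\langle r,s\rangle=1$; and the Guth--Katz non-degeneracy concerns lines lying in a common \emph{plane or regulus} (doubly ruled quadric), while concurrency---many lines through a common point---is the separate degeneracy that, together with coplanarity, is precisely what pulls back under the Klein correspondence to the exceptional line of the statement.
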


To prove Theorem \ref{thm:mainexpanding}, 
we divide the quadratic polynomials into two types: those with only one or two of the mixed terms $xy,xz,yz$, 
and those with all three.
Our approach to both types is similar, but it appears technically simpler to treat these types separately.

\begin{lemma}\label{lem:expandingwithoutyz}
Consider a polynomial
\[f(x,y,z) = axy+ bxz + r(x) + s(y)+t(z),\] for polynomials $r,s,t\in \F[u]$ of degree at most two, with $a\neq0$ and $t(z)$ not constant.
Let $A, B,C\subset \F$  with $|A|,|B|\leq |C|$.
Then
\[|f(A\times B\times C)| \gg \min\left\{|A|^{1/2}|B|^{1/2}|C|^{1/2},p\right\}. \]
\end{lemma}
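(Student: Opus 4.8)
The plan is to bound the number of \emph{collisions} of $f$ and to convert the desired lower bound into a point--plane incidence estimate governed by Theorem~\ref{thm:rudnev}. Write $M=|A||B||C|$, and for $\lambda\in\F$ let $n(\lambda)=|\{(x,y,z)\in A\times B\times C: f(x,y,z)=\lambda\}|$. Then $\sum_\lambda n(\lambda)=M$, and by Cauchy--Schwarz
\[
M^2=\Big(\sum_\lambda n(\lambda)\Big)^2\le |f(A\times B\times C)|\cdot\sum_\lambda n(\lambda)^2=|f(A\times B\times C)|\cdot E,
\]
where $E$ counts the solutions of $f(x,y,z)=f(x',y',z')$ with both triples in $A\times B\times C$. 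Hence it suffices to prove $E\ll M^{3/2}$, which yields $|f(A\times B\times C)|\gg M^{1/2}=|A|^{1/2}|B|^{1/2}|C|^{1/2}$.

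Next I would expose the bilinear structure of the collision equation. Writing $f(x,y,z)=x(ay+bz)+r(x)+s(y)+t(z)$, the equation $f(x,y,z)=f(x',y',z')$ can be rearranged as
\[
(ay+bz)\cdot x-x'\cdot(ay'+bz')+\big(r(x)-s(y')-t(z')\big)=r(x')-s(y)-t(z).
\]
This is exactly the incidence between the point $P=\big(x,\ ay'+bz',\ r(x)-s(y')-t(z')\big)\in\F^3$, indexed by $(x,y',z')\in A\times B\times C$, and the plane $(ay+bz)\,\xi_1-x'\,\xi_2+\xi_3=r(x')-s(y)-t(z)$, indexed by $(x',y,z)\in A\times B\times C$. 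Let $\RR$ and $\S$ be the resulting set of points and set of planes; then $|\RR|,|\S|\le M$ and $E=\I(\RR,\S)$, counted with the multiplicities of the two index maps. Since the coefficient of $\xi_3$ equals $1$, every index triple gives a genuine plane, and $|\RR|\ll|\S|$ holds as required by Theorem~\ref{thm:rudnev}.

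With this setup, Rudnev's theorem will give $E\ll|\RR|^{1/2}|\S|+k|\S|\ll M^{3/2}+kM$, so it remains to check its hypotheses with a collinearity parameter $k\ll M^{1/2}$, and to control the multiplicities of the two index maps. This is where I expect the main difficulty. For the multiplicities, I would show that the fibre of the point map over a fixed $P$ is governed, after eliminating $z'$ via $ay'+bz'=P_2$, by a single degree-$\le 2$ equation in $y'$, hence has $O(1)$ points unless $s(y')+t(z')$ depends only on $ay'+bz'$; the latter forces $s,t$ to be linear with proportional leading parts, a degeneracy I would treat separately. For the collinearity parameter, I would bound the number of points of $\RR$ on a line: a line along which the first coordinate $x$ varies carries at most $|A|\le M^{1/2}$ points (using $|A|\le|C|$), while on a line with $x$ fixed the count reduces to the number of $(y',z')\in B\times C$ whose image $(ay'+bz',\,-s(y')-t(z'))$ lies on a fixed line, again a one-variable problem.

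The delicate point is that when $t$ is linear a whole family of such points can be collinear, so I cannot bound the point-richness of every line by $M^{1/2}$ directly; instead I would use that the normalization (coefficient $1$ on $\xi_3$) forces such a line to lie in at most $|B|\le M^{1/2}$ planes of $\S$ \emph{unless} $x$ takes the single exceptional value making $f$ independent of $z$. That exceptional value of $x$ (and the proportional-leading-parts degeneracy above) I would remove from $A$ or handle by a direct argument, after which no line is simultaneously rich in points and in planes, so $k\ll M^{1/2}$ is admissible and $E\ll M^{3/2}$. Finally, the truncation at $p$ comes from the hypothesis $|\RR|\ll p^2$ in Theorem~\ref{thm:rudnev}: when $M\gg p^2$ I would first pass to subsets $A'\subseteq A$, $B'\subseteq B$, $C'\subseteq C$ with $|A'||B'||C'|\approx p^2$ and $|A'|,|B'|\le|C'|$, and apply the bound there, obtaining $|f(A\times B\times C)|\ge|f(A'\times B'\times C')|\gg (p^2)^{1/2}=p$.
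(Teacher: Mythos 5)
Your overall strategy (energy plus Cauchy--Schwarz, then Rudnev's point--plane bound) is the paper's, but you assign the variables to points and planes differently: you put $(y',z')$ together via the second coordinate $ay'+bz'$, whereas the paper takes the point $(x,\,y',\,bxz+r(x)+t(z)-s(y'))$, hiding $z$ inside the third coordinate so that the projection to the first two coordinates is exactly $A\times B$. This difference is not cosmetic, and your version has a genuine gap precisely in the degenerate case you propose to ``treat separately'' or ``handle by a direct argument''. If $b\neq 0$ and $s,t$ are linear with $bs_1=at_1$, then $s(y')+t(z')=\mu(ay'+bz')$ with $\mu=t_1/b$, so $f(x,y,z)=(x+\mu)(ay+bz)+\tilde r(x)$. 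In this case every point of your $\RR$ with a fixed first coordinate $x_0$ lies on the single line $\{\xi_1=x_0,\ \xi_3=\tilde r(x_0)-\mu\xi_2\}$, carrying all $|B||C|$ index triples, and the point and plane multiplicities are as large as $|B||C|/|aB+bC|$; both the hypothesis of Theorem~\ref{thm:rudnev} and the correspondence $E\approx\I(\RR,\S)$ collapse. There is no ``direct argument'' here: when $\tilde r$ is linear the required conclusion is exactly $|(A+\mu)\cdot(aB+bC)|\gg(|A||B||C|)^{1/2}$, i.e.\ the $x(y+z)$ expander of Aksoy Yazici et al.\ \cite{AMRS}, and when $\tilde r$ is quadratic it is an $xy+x^2$-type expander; each needs its own incidence argument (in fact one using the paper's grouping). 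The paper's choice avoids this entirely: its third coordinate $(bx+t_1)z+\cdots$ is a non-constant function of $z$ for all but one value of $x$, so multiplicities are $O(1)$ after deleting one element of $A$, and every non-vertical line meets $\RR$ in at most $\max\{|A|,|B|\}$ points because the projection is $A\times B$.

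A second, smaller flaw: your multiplicity dichotomy (``$O(1)$ points unless $s(y')+t(z')$ depends only on $ay'+bz'$'') is false as stated. Take $a=b=1$, $s(y)=-y^2$, $t(z)=z^2$: then $s(y')+t(z')=(z'-y')(z'+y')$ does not depend only on $y'+z'$, yet for the single value $\xi_2=0$ the fibre of the point map is $\{(y',-y'):y'\in B,\,-y'\in C\}$, of size up to $\min\{|B|,|C|\}$. The quadratic in $y'$ you obtain after eliminating $z'$ can degenerate for one exceptional value of $\xi_2$ without degenerating identically. This is patchable (the bad triples number at most $|A|\min\{|B|,|C|\}$, and their contribution to $E$ is at most $(|A||B|)^2\ll(|A||B||C|)^{3/2}$ since $|A|,|B|\leq|C|$), but the patch is not in your write-up, and the analogous issue arises for the planes. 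In short: your reduction to Rudnev is sound in spirit, your collinearity analysis for the generic case is essentially correct, but the grouping you chose forces you through a degenerate case that is itself a nontrivial expander theorem, and that case is exactly what the lemma is supposed to prove.
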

\begin{proof}
We may assume  $|A||B||C|\ll p^2$.
Otherwise, we can remove elements from the sets, while preserving $|A|,|B|\leq |C|$,
until we have sets $A',B',C'$ that do satisfy
$|A'||B'||C'|\ll p^2$.
The proof below then gives 
$|f(A'\times B'\times C')|\gg |A'|^{1/2}|B'|^{1/2}|C'|^{1/2}=p$.

We let $E$ be the number of solutions $(x,y,z,x',y',z')\in (A\times B\times C)^2$ of 
\[ f(x,y,z) = f(x',y',z').\]
We can rewrite this equation to
\[ ayx- ax'y' +(bxz +r(x) + t(z)- s(y'))
= bx'z'+r(x')+t(z') - s(y).\]
We define a point set
\[\RR = \{(x, y', bxz +r(x)+ t(z)- s(y')) : (x,y',z)\in A\times B\times C \} \]
and a plane set
\[ \S=\{ay X -ax'Y  +Z = bx'z'+r(x')+t(z') - s(y) :
(x',y,z')\in A\times B\times C
\}.\]

A point in $\RR$ corresponds to at most two points $(x,y',z)\in A\times B\times C$, 
since $x$ and $y'$ are determined by the first two coordinates, 
and $z$ is then determined with multiplicity at most two by the quadratic expression in the third coordinate.
Here we use the assumption that $t(z)$ is not constant; the only exception occurs when $t(z)$ is linear and its main term is cancelled out by $bxz$; this is negligible since it only occurs for one value of $x$.
The same argument shows that a plane in $\S$ corresponds to at most two points $(x',y,z')\in A\times B\times C$.
Thus we have $|\RR|,|\S|\approx |A||B||C|$.

A solution of $f(x,y,z)=f(x',y',z')$ corresponds to an incidence between a point in $\RR$ and a plane in $\S$.
Conversely, an incidence corresponds to at most four solutions, since the point and the plane have multiplicity at most two.
Hence $\I(\RR,\S) \approx E$.
By assumption we have $|\RR|\approx |A||B||C| \ll p^2$,
which allows us to apply Theorem \ref{thm:rudnev}.
We need to prove an upper bound on the $k$ such that there is a line containing $k$ points of $\RR$ and contained in $k$ planes of $\S$.

The projection of $\RR$ to the first two coordinates is $A\times B$, 
so a line contains at most $\max\{|A|,|B|\}$ points of $\RR$, unless it is vertical, in which case it could contain $|C|$ points of $\RR$. 
However, the planes in $\S$ contain no vertical lines (since they are defined by equations in which the coefficient of $Z$ is non-zero), so in this case the condition of Theorem \ref{thm:rudnev} holds with $k = \max\{|A|,|B|\}\leq |A|^{1/2}|B|^{1/2}|C|^{1/2}$.

Thus we get
\[ E \approx \I(\RR,\S) 
\ll |A|^{3/2}|B|^{3/2}|C|^{3/2}.\]
By the Cauchy-Schwartz inequality we have $|A|^2|B|^2|C|^2\leq E\cdot |f(A\times B\times C)|$,
so we get
\[|f(A\times B\times C)| \gg |A|^{1/2}|B|^{1/2}|C|^{1/2}.\]
This finishes the proof.
\end{proof}

It would not be hard to generalize Lemma \ref{lem:expandingwithoutyz} to polynomials of the form 
\[f(x,y,z) = g(x)h(y)+ k(x)l(z) + r(x) + s(y)+t(z),\]
with the resulting bound depending on the degrees of $g,h,k,l,r,s,t$.

\begin{lemma}\label{lem:expandingwithyz}
Let $f\in \F[x,y,z]$ be a polynomial of the form
\[f(x,y,z) = axy + bxz + cyz + dx^2+e y^2 + g z^2,\]
with none of $a,b,c$ zero,
and with $4eg\neq c^2$.
Let $A, B,C\subset \F$ with $|A|=|B|=|C|=N$.
Then
\[|f(A\times B\times C)| \gg \min\left\{
N^{3/2},p\right\}. \]
\end{lemma}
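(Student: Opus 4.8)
The plan is to follow the strategy of Lemma \ref{lem:expandingwithoutyz}: we let $E$ be the number of solutions $(x,y,z,x',y',z')\in(A\times B\times C)^2$ of $f(x,y,z)=f(x',y',z')$, rewrite this equation as an incidence between a point set $\RR$ and a plane set $\S$ in $\F^3$, estimate $E\approx\I(\RR,\S)$ using Theorem \ref{thm:rudnev}, and finish by Cauchy--Schwarz via $N^6\le E\cdot|f(A\times B\times C)|$. As before we may assume $N^3\ll p^2$, since otherwise we delete elements from $A,B,C$ equally until the product is $\approx p^2$ and obtain the bound $p$. The genuinely new feature compared to Lemma \ref{lem:expandingwithoutyz} is the term $cyz$, which couples $y$ and $z$ and threatens to spoil the multiplicity of the point map; arranging the incidence so that $\RR$ and $\S$ still have bounded multiplicity is exactly where the hypothesis $4eg\neq c^2$ will be used.

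To separate variables I would split the six variables into a point group $\{x,y',z\}$ and a plane group $\{x',y,z'\}$, as in Lemma \ref{lem:expandingwithoutyz}. The bilinear cross terms then combine as $axy+cyz=(ax+cz)y$ and $ax'y'+cy'z'=(ax'+cz')y'$, so $f(x,y,z)=f(x',y',z')$ becomes an incidence for
\[\RR=\{(ax+cz,\;y',\;bxz+dx^2+gz^2-ey'^2):(x,y',z)\in A\times B\times C\}\]
and
\[\S=\{yX-(ax'+cz')Y+Z=-ey^2+bx'z'+dx'^2+gz'^2:(x',y,z')\in A\times B\times C\}.\]
A point of $\RR$ determines $y'$ from its second coordinate and forces $(x,z)$ to lie on the line $ax+cz=X_1$ and on the conic $dx^2+bxz+gz^2=\kappa$; these meet in at most two points provided the form $dx^2+bxz+gz^2$ is nonzero in the direction $(c,-a)$ of the line, i.e.\ provided $dc^2-abc+ga^2\neq0$, and the same condition bounds the multiplicity of $\S$. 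Symmetrically, collecting $z$ gives the condition $dc^2-abc+eb^2\neq0$ and collecting $x$ gives $eb^2-abc+ga^2\neq0$. The crucial point is that these three conditions cannot all fail: if $dc^2+ga^2=dc^2+eb^2=eb^2+ga^2=abc$, then $dc^2=eb^2=ga^2=abc/2$, whence $e=ac/(2b)$ and $g=bc/(2a)$ and therefore $4eg=c^2$, contradicting the hypothesis. Hence at least one grouping gives $|\RR|,|\S|\approx N^3$ together with $E\approx\I(\RR,\S)$, and I would carry out the rest of the argument with that grouping. I expect this identity to be the main obstacle, in the sense that it is the one place where the precise hypothesis $4eg\neq c^2$ is indispensable.

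It remains to bound the parameter $k$ of Theorem \ref{thm:rudnev}, which I expect to be routine. Every plane in $\S$ has nonzero coefficient of $Z$, so no plane contains a vertical line, and vertical lines contribute nothing. For a non-vertical line $\ell$ with direction $(q_1,q_2,q_3)$ I would bound $|\RR\cap\ell|$ by casework: if $q_2\neq0$ then each value of $y'$ fixes the line parameter and leaves a line-meets-conic system with at most two solutions, so $|\RR\cap\ell|\ll N$; if $q_2=0$ then $y'$ is constant and the points correspond to pairs $(x,z)\in A\times C$ on a single conic, which contains $\ll N$ such points since for each $x\in A$ the equation is at most quadratic in $z$. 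Thus $k\ll N\ll N^{3/2}$, and Theorem \ref{thm:rudnev} gives
\[E\approx\I(\RR,\S)\ll|\RR|^{1/2}|\S|+k|\S|\ll N^{9/2}.\]
Cauchy--Schwarz then yields $|f(A\times B\times C)|\ge N^6/E\gg N^{3/2}$, which completes the proof.
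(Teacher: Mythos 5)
Your proposal is correct and follows essentially the same route as the paper: the same energy/Cauchy--Schwarz framework, the same reduction to Rudnev's point--plane bound via a grouping of the bilinear terms as (one variable) times (a linear form in the other two), and the same collinearity analysis giving $k\ll N$. The only real difference is how $4eg\neq c^2$ is exploited: the paper fixes the single grouping $x(ay+bz)$ and, when the leading coefficient $b^2e-abc+a^2g$ of the resulting quadratic in $y'$ vanishes, falls back on its linear coefficient $(bc-2ag)v$ being nonzero, whereas you rotate among the three groupings and use the identity $dc^2=eb^2=ga^2=abc/2\Rightarrow 4eg=c^2$ to find a non-degenerate one --- both work, though your division by $2$ deserves the one-line remark that in characteristic $2$ the three quantities sum to $-3abc=abc\neq 0$, so one is still nonzero.
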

\begin{proof}
We may assume  $|A||B||C|\ll p^2$ as in the proof of Lemma \ref{lem:expandingwithoutyz}.
We again bound the number $E$ of solutions $(x,y,z,x',y',z')\in (A\times B\times C)^2$ of $f(x,y,z) = f(x',y',z')$.
We rewrite the equation to
\[ (ay+bz)x  -x' (ay'+bz')+  (d x^2 - (e(y')^2 +cy'z'+ g(z')^2) = 
d(x')^2-(e y^2 + cyz +g z^2).\]
We define a point set
\[\RR = \{(x, ay'+bz', 
 dx^2 - (e (y')^2+cy'z'+ g(z')^2) : (x,y',z')\in A\times B\times C \} \]
and a plane set
\[ \S=\{ (ay+bz)X  -x' Y+ Z = 
d (x')^2-(e y^2 +cyz+ g z^2) :
(x',y,z)\in A\times B\times C
\}.\]

We show that a point $(u,v,w)\in \RR$ corresponds to at most two points $(x,y',z')\in A\times B\times C$.
Suppose that we have $u=x, v = ay'+bz', w = d x^2 -e (y')^2- cy'z'-g (z')^2$.
Then
\[w = d u^2 - e (y')^2 -cy' \frac{v-ay'}{b} - g\left(\frac{v-ay'}{b}\right)^2, \]
or equivalently 
\[\left(b^2d-abc+a^2g\right)(y')^2  +\left(bcv - 2agv\right)y' +b^2w - b^2d u^2+g v^2 = 0.\]
We do not have both $b^2d-abc+a^2g =0$ and $bc-2ag=0$, 
since these two equations would imply $4eg=c^2$, contradicting the assumption of the lemma.
Hence there are at most two values of $y'$ corresponding to $(u,v,w)$, with unique corresponding $x,z'$.

The same argument shows that a plane in $\S$ corresponds to at most two points $(x',y,z)$.
Hence we have $|\RR|,|\S|\approx |A||B||C|$ and $\I(\RR,\S) \approx E$.
By the assumption at the start of the proof we have $|\RR|\approx |A||B||C| \ll p^2$.
This allows us to apply Theorem \ref{thm:rudnev}, if we find an upper bound on the maximum number of collinear points in $\RR$.

The point set $\RR$ is covered by $|A|$ planes of the form $x=x_0$. 
If a line is not in one of these planes, 
then it intersects $\RR$ in at most $|A|=N$ points.
Let $\ell$ be a line contained in a plane $x=x_0$.
The points of $\RR$ in this plane lie on a curve which is either a parabola or a line.
In the first case, 
$\ell$ intersects the parabola in at most two points.
In the second case, $\ell$ either intersects the line in one point, or it equals that line, which contains $|C|$ points.
It is easy to see from the equations that for distinct $y'$ we get distinct curves, so the case where the curve equals $\ell$ occurs at most once.
This implies that $\ell$ contains at most $2|B|+|C|\ll N$ points of $\RR$.

With $k=N$ we get $E\approx\I(\RR,\S) 
\ll (N^3)^{3/2} + N\cdot N^3 \ll N^{9/2}$,
and again using Cauchy-Schwartz we get $|f(A\times B\times C)| \gg N^{3/2}$.
This finishes the proof.
\end{proof}

We now combine the two lemmas to prove Theorem \ref{thm:mainexpanding}.

\begin{proof}[Proof of Theorem \ref{thm:mainexpanding}]
Let $f(x,y,z)$ be a quadratic polynomial that is not of the form $g(h(x)+k(y)+l(z))$.
In particular, $f$ has at least one of the mixed terms $xy,xz,yz$, 
since otherwise it would be of the form $h(x)+k(y)+l(z)$.
If one of the terms $xy,xz,yz$ does not occur in $f$, 
then Lemma \ref{lem:expandingwithoutyz} proves the theorem.

Thus we can assume that $f$ has the form 
\[f(x,y,z) = axy+bxz+cyz + r(x)+s(y)+t(z),\]
with $a,b,c$ non-zero
and $r,s,t$ polynomials of degree at most two.
We may assume that $r,s,t$ have no constant or linear terms.
Indeed, any constant term can be removed immediately, 
and any linear terms can be removed by a change of variables of the form $\widetilde{x} = p_1x+q_1,\widetilde{y}=p_2y+q_2,\widetilde{z}=p_3z+q_3$.
Thus we assume that $f$ has the form
\[f(x,y,z) = axy + bxz + cyz + d x^2+ey^2 + g z^2.\]
The assumption that $f$ is not of the form $g(h(x)+k(y)+l(z))$, which still holds after the linear change of variables, implies that the equations $4de=a^2, 4dg=b^2,4eg=c^2$ do not all hold. 
Otherwise, we could write $f=(\sqrt{d}x+\sqrt{e}y+\sqrt{g}z)^2$ (if $d,e,g$ are not squares in $\F$, we can write $f=(d\sqrt{eg}x+e\sqrt{dg}y+g\sqrt{de}z)^2/deg$).
By permuting the variables, we can assume that $4eg\neq c^2$.
Then we can apply Theorem \ref{lem:expandingwithyz},
which finishes the proof.
\end{proof}

\section{Consequences of Theorem \ref{thm:mainexpanding}}
\label{sec:conseqs}

\begin{proof}[Proof of Theorem \ref{thm:A+A^2}]
We consider the equation
\begin{equation}\label{eq:eq1}(x-y)^2+z=t.
\end{equation}
Observe that for any $a,b,c\in A$, a solution of \eqref{eq:eq1} is given by $x =a+b^2\in A+A^2$, $y = b^2\in A^2$, $z = c\in A$, and $t = c+a^2\in A+A^2$.
Thus we have
\begin{equation}\label{eq:eq2}
|A|^3\leq \left|\left\{(x,y,z,t)\in (A+A^2)\times A^2\times A\times (A+ A^2): (x-y)^2+z=t\right\}\right|.
\end{equation}
If we set 
\[E = \left| \left\lbrace (x,y,z,x',y',z')\in ((A+A^2)\times A^2\times A)^2\colon (x-y)^2+z=(x'-y')^2+z'\right\rbrace\right|, \]
then \eqref{eq:eq2} and the Cauchy-Schwarz inequality give
\begin{equation}\label{eq:eq3}
\frac{|A|^6}{|A+A^2|}\leq E.
\end{equation}

We now partly follow the proof of Lemma \ref{lem:expandingwithoutyz} for $f(x,y,z)=(x-y)^2+z$.
We define a point set
\[ \mathcal{R} = \{(x,y', x^2+z-(y')^2): (x,y',z)\in (A+A^2)\times A^2\times A\}\]
and a plane set 
\[ \mathcal{S} = \{-2yX + 2x'Y + Z = (x')^2+z' - y^2: (x',y,z')\in (A+A^2)\times A^2\times A\}.\]
We are already done if $|A+A^2|\gg |A|^{6/5}$, so we can assume that $|A+A^2|\ll |A|^{6/5}$, which gives $|\mathcal{R}|\approx |A+A^2||A^2||A|\ll |A|^{16/5}\ll p^2$, using the assumption $|A|\ll p^{5/8}$.
Thus we can apply Theorem \ref{thm:rudnev}.
By the same argument as in the proof of Lemma \ref{lem:expandingwithoutyz}, 
we can use $k = \max\{|A+A^2|, |A^2|\} = |A+A^2|$,
so we get
\begin{equation}\label{eq:eq4}
E\ll  |A+A^2|^{3/2}|A|^3 + |A+A^2|^2|A|^2.
\end{equation}
If the second term is larger than the first, then we have $|A+A^2| \gg |A|^2$, and we would be done.
Otherwise, the first term is larger,
so combining \eqref{eq:eq3} and \eqref{eq:eq4} gives 
\[\frac{|A|^6}{|A+A^2|} \ll |A+A^2|^{3/2}|A|^3,\]
which implies that 
\[|A+A^2|\gg |A|^{6/5}.\]
This completes the proof of the theorem.
\end{proof}

\begin{proof}[Proof of Theorem \ref{thm:A^2+A^2}]
The proof is very similar to that of Theorem \ref{thm:A+A^2}, and we omit most of the details.
The key observation, analogous to \eqref{eq:eq2}, is
\[|A|^3\leq \left|\left\{(x,y,z,t)\in (A+A)\times A\times A^2\times (A^2+ A^2): (x-y)^2+z=t\right\}\right|. \]
By following the steps in the proof of Theorem \ref{thm:A+A^2} we now obtain
\[ \frac{|A|^6}{|A^2+A^2|} \ll |A+A|^{3/2}|A|^3\]
under the condition $|A|\ll p^{5/8}$, 
which gives
\[ |A+A|^3|A^2+A^2|^2 \gg |A|^6.\]
This proves the theorem.
\end{proof}


\newpage

To prove (a generalization of) Theorem \ref{thm:maindistances}, 
we use a special case of Lemma \ref{lem:expandingwithoutyz}.

\begin{corollary}\label{cor:g(AxA)+C}
Let $g\in \F[x,y]$ be a quadratic polynomial with a non-zero $xy$-term.
Let $A,C\subset \F$ with $|A|\leq |C|$.
Then
\[|g(A\times A)+C|\gg \min\left\{|A||C|^{1/2},p\right\}. \]
\end{corollary}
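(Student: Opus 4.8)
The plan is to derive this corollary from Lemma~\ref{lem:expandingwithoutyz} by viewing the expression $g(A\times A)+C$ as the image of a three-variable expander. The key observation is that $g(x,y)+z$ is exactly a polynomial of the type handled by Lemma~\ref{lem:expandingwithoutyz}: writing $g(x,y)=axy+r(x)+s(y)$ with $a\neq 0$, the polynomial $f(x,y,z)=g(x,y)+z=axy+r(x)+s(y)+z$ has the required form $axy+bxz+r(x)+s(y)+t(z)$ with $b=0$, $t(z)=z$ (which is non-constant, as required), and leading coefficient $a\neq 0$. Thus $g(A\times A)+C=f(A\times A\times C)$, and the corollary should follow by applying the lemma to the three input sets.

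First I would set $f(x,y,z)=g(x,y)+z$ and check that it satisfies the hypotheses of Lemma~\ref{lem:expandingwithoutyz}. Since $g$ has a non-zero $xy$-term, we have $a\neq 0$, and the $z$-term is linear and non-constant, so $t(z)=z$ works. The main subtlety is the size ordering: Lemma~\ref{lem:expandingwithoutyz} requires the inputs to the first two variables to be the two \emph{smallest} sets. Here the inputs are $A$ (twice) and $C$, with the hypothesis $|A|\leq |C|$, so $A$ and $A$ are indeed the two smallest sets and $C$ is the largest; hence the lemma applies with its roles assigned correctly. Applying the lemma then gives
\[
|f(A\times A\times C)|\gg \min\left\{|A|^{1/2}|A|^{1/2}|C|^{1/2},p\right\}=\min\left\{|A||C|^{1/2},p\right\},
\]
which is exactly the claimed bound.

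There is essentially no serious obstacle here, since the corollary is a direct specialization. The one point requiring a little care is confirming that the degenerate case flagged in the proof of Lemma~\ref{lem:expandingwithoutyz}—where $t(z)$ is linear and its leading term is cancelled by a $bxz$ term—does not cause trouble. Because $b=0$ in our application, the term $z$ cannot be cancelled by anything, so the map from points of $\RR$ back to triples $(x,y',z)$ remains at most two-to-one (in fact exactly one-to-one in the $z$-coordinate, since $z$ appears linearly and uncancelled). Thus the incidence setup goes through verbatim, and the proof reduces to citing Lemma~\ref{lem:expandingwithoutyz} after verifying the form of $f$ and the size condition $|A|\leq |C|$.
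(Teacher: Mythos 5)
Your proposal is correct and matches the paper's intended argument exactly: the paper states this corollary as a direct special case of Lemma~\ref{lem:expandingwithoutyz}, obtained by applying that lemma to $f(x,y,z)=g(x,y)+z$ with $B=A$ and $b=0$, which is precisely what you do. Your verification of the hypotheses (non-zero $xy$-coefficient, $t(z)=z$ non-constant, and the size ordering $|A|,|A|\leq|C|$) is all that is needed.
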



\begin{theorem}\label{thm:generaldistances}
Let $g_1, \ldots, g_d\in \F[x,y]$ be quadratic polynomials, each of which has a non-zero $xy$-term.
Then for $A\subset \F$ we have
\[\left|\sum_{i=1}^d g_i(A\times A)\right|\gg \min\left\{|A|^{2-\frac{1}{2^{d-1}}},p\right\}.\]
\end{theorem}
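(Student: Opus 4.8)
The plan is to induct on $d$, using Corollary \ref{cor:g(AxA)+C} as the engine that adjoins one summand $g_i(A\times A)$ at a time. Write $S_d=\sum_{i=1}^d g_i(A\times A)$ and set $\alpha_d=2-\tfrac{1}{2^{d-1}}$, so the goal is $|S_d|\gg\min\{|A|^{\alpha_d},p\}$. For the base case $d=1$ (where $\alpha_1=1$) I would argue directly: fixing any $a\in A$ for which $y\mapsto g_1(a,y)$ is non-constant (all but at most one $a$ qualify, since the nonzero $xy$-term forces this univariate polynomial to be constant for at most one value of $a$), the resulting degree-$\le 2$ polynomial attains at least $|A|/2$ distinct values on $A$. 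Hence $|S_1|=|g_1(A\times A)|\gg|A|\gg\min\{|A|,p\}$, the case $A=\{0\}$ being trivial.

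For the inductive step ($d\ge 2$) I would split $S_d=g_d(A\times A)+S_{d-1}$ and put $C=S_{d-1}$. The inductive hypothesis gives $|C|\gg\min\{|A|^{\alpha_{d-1}},p\}$, and since $\alpha_{d-1}\ge 1$ this yields $|C|\gg|A|$, so the hypothesis $|A|\le|C|$ of Corollary \ref{cor:g(AxA)+C} is met (up to the usual absorption of constants, which is harmless here). Applying the corollary and substituting the inductive bound for $|C|$ gives
\[|S_d|\gg\min\{|A||C|^{1/2},p\}\gg\min\{|A|\cdot\min\{|A|^{\alpha_{d-1}},p\}^{1/2},\,p\}=\min\{|A|^{1+\alpha_{d-1}/2},\,|A|p^{1/2},\,p\}.\]
The exponent recursion closes exactly, since $1+\alpha_{d-1}/2=1+(1-\tfrac{1}{2^{d-1}})=\alpha_d$.

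It then remains to verify that the spurious middle term $|A|p^{1/2}$, which appears when $|C|$ saturates at $p$, never lowers the minimum below the target: if $|A|\ge p^{1/2}$ then $|A|p^{1/2}\ge p$, while if $|A|<p^{1/2}$ then, using $\alpha_d\le 2$, we have $|A|^{\alpha_d}\le|A|^2<|A|p^{1/2}$. In either case $|A|p^{1/2}\ge\min\{|A|^{\alpha_d},p\}$, so this term is redundant and $|S_d|\gg\min\{|A|^{\alpha_d},p\}$, completing the induction. The induction itself is routine and the exponent recursion is engineered to match, so the only genuine care goes into this $\min\{\cdot,p\}$ bookkeeping and into confirming the hypothesis $|A|\le|C|$ at each step; I expect the former to be the main technical nuisance rather than a real obstacle.
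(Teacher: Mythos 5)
Your proposal is correct and follows essentially the same route as the paper: induction on the number of summands, with Corollary \ref{cor:g(AxA)+C} applied to $g=g_{d}$ and $C=S_{d-1}$, and the exponent recursion $1+\alpha_{d-1}/2=\alpha_d$. The paper simply declares the base case trivial and suppresses the $\min\{\cdot,p\}$ bookkeeping that you spell out; your added checks are harmless and correct.
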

\begin{proof}
Set $G_k(x_1,y_1\ldots, x_k,y_k) = \sum_{i=1}^k g_i(x_i,y_i)$.
We prove by induction on $k$ that
\[\left|G_k(A^{2k})\right|\gg \min\left\{|A|^{2-\frac{1}{2^{k-1}}},p\right\}.\]
The base case $k=1$ holds trivially.
Suppose that the claim holds for some $k$ with $1\leq k<d$. 
Applying Corollary \ref{cor:g(AxA)+C} with $g=g_{k+1}$ and $C = G_k(A^{2k})$ gives
\[|G_{k+1}(A^{2(k+1)})|\gg  \min\left\{|A|\left(|A|^{2-\frac{1}{2^{k-1}}}\right)^{1/2},p\right\} = \min\left\{|A|^{2-\frac{1}{2^{(k+1)-1}}},p\right\}.\]
This proves the theorem.
\end{proof}

Theorem \ref{thm:maindistances} follows immediately by setting $g_i=(x-y)^2$ for every $i$ in Theorem \ref{thm:generaldistances}.
To prove Theorem \ref{thm:realdistances},
we merely have to start the induction at $k=2$, and plug in the result of Guth and Katz \cite{GK}.

\begin{proof}[Proof of Theorem \ref{thm:realdistances}.]
Set $\F=\R$.
We prove $|\Delta(A^d)|\gg |A|^2/\log^{1/2^{d-2}}|A|
$ by induction on $d$.
The base case $d=2$ follows from the main result of \cite{GK}, 
stated here as \eqref{eq:GK} in 
Section \ref{sec:intro}.
Suppose that the claim holds for some $d>2$.
Applying Corollary \ref{cor:g(AxA)+C} with $g=(x-y)^2$ and $C = \Delta(A^d)$ gives
\[|\Delta(A^{d+1})|\gg 
|A||\Delta(A^d)|^{1/2}
\gg |A|\left(\frac{|A|^2}{\log^{1/2^{d-2}}|A|}\right)^{1/2} = \frac{|A|^2}{\log^{1/2^{(d+1)-2}}|A|}.\]
This proves the theorem.
\end{proof}

Although this proof arose naturally from our general approach, 
it is worth noting that over $\R$ it is possible to prove the relevant case of Corollary \ref{cor:g(AxA)+C} using only the Szemer\'edi-Trotter theorem \cite{ST}, 
which leads to a proof of Theorem \ref{thm:realdistances} without Theorem \ref{thm:rudnev}.

\begin{proof}[Alternative proof of Theorem \ref{thm:realdistances}.]
We define a point set and curve set by
\[\mathcal{P} = A\times ((A-A)^2+C),~~~~~
\mathcal{C} = \{Y = (X-a)^2+c: (a,c)\in A\times C\}. \]
The curves in $\mathcal{C}$ are parabolas, 
but we can apply the bijection $\varphi:(X,Y)\mapsto (X,Y-X^2)$,
which sends the parabola $Y = X^2-2aX+a^2+c$ to the line $Y' = -2aX'+a^2+c$.
Applying the Szemer\'edi-Trotter theorem \cite{ST} to the points $\varphi(\mathcal{P})$ and the lines $\varphi(\mathcal{C})$ gives
\[|A|^2|C| \leq \I(\varphi(\mathcal{P}),\varphi(\mathcal{C}))
\ll (|A||(A-A)^2+C|)^{2/3}(|A||C|)^{2/3}+|A||(A-A)^2+C| + |A||C|.
\]
It follows that $|(A-A)^2+C| \gg |A||C|^{1/2}$.

We can now prove the theorem by induction exactly as in the previous proof.
\end{proof}

\bigskip
\bigskip

We are finished proving the main theorems in Section \ref{sec:intro},
but we give one more  application that we find interesting.

Another polynomial in the form of Theorem \ref{thm:generaldistances}
is the dot product function.
For $P\subset \F^d$,
define its \emph{dot product set} by
\[\Pi(P) = \left\{\sum_{i=1}^d x_iy_i: (x_1,\ldots, x_d),(y_1,\ldots,y_d)\in P\right\}.\]
Choosing $g_i=xy$ for every $i$ in Theorem \ref{thm:generaldistances} gives $|\Pi(A^d)|
\gg  \min\{|A|^{2-\frac{1}{2^{d-1}}},p\}$ for $A\subset \F$.
This bound was proved for $d=2,3$ in \cite{RRS}.
More interestingly, 
we can prove that a better expansion bound holds for distances \emph{or} for dot products (or for both).

\begin{theorem}\label{thm:distanceordot}
For $A\subset \F$ with $|A|\ll p^{\frac{1}{2}+\frac{1}{5\cdot 2^{d-1}-2}}$ we have 
\[\max\left\{|\Pi(A^{d})|, 
|\Delta(A^{d})|\right\} \gg |A|^{2-\frac{1}{5\cdot 2^{d-3}}}.\]
\end{theorem}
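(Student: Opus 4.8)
The plan is to prove the theorem by induction on $d$, exactly mirroring the structure of Theorem \ref{thm:generaldistances}, but starting from a stronger base case that exploits the interplay between additive and multiplicative structure. Write $N=|A|$ and set $M_d=\max\{|\Pi(A^d)|,|\Delta(A^d)|\}$. The engine of the induction is that both functions satisfy the same one-step recursion under Corollary \ref{cor:g(AxA)+C}. Indeed $\Pi(A^{d+1})=A\cdot A+\Pi(A^d)$ and $\Delta(A^{d+1})=(A-A)^{(2)}+\Delta(A^d)$, where I write $(A-A)^{(2)}$ for the set of squared differences; applying the corollary with $g=xy,\,C=\Pi(A^d)$ and with $g=(x-y)^2,\,C=\Delta(A^d)$ gives $|\Pi(A^{d+1})|\gg N|\Pi(A^d)|^{1/2}$ and $|\Delta(A^{d+1})|\gg N|\Delta(A^d)|^{1/2}$, the hypothesis $|A|\le|C|$ being satisfied because each set has size $\gg N^{3/2}\ge N$ for $d\ge 2$. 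Applying the recursion to whichever of the two sets is larger at level $d$ then yields $M_{d+1}\gg N M_d^{1/2}$, up to the truncation at $p$.

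The crux is the base case $d=2$, namely $M_2\gg N^{8/5}$ for $N\ll p^{5/8}$. Here I would again use Corollary \ref{cor:g(AxA)+C}, but feed its output into a sum--product estimate. Two applications give $|\Pi(A^2)|=|A\cdot A+A\cdot A|\gg N|A\cdot A|^{1/2}$ and $|\Delta(A^2)|=|(A-A)^{(2)}+(A-A)^{(2)}|\gg N|(A-A)^{(2)}|^{1/2}\gg N|A-A|^{1/2}$, the last step because squaring is at most two-to-one so that $|(A-A)^{(2)}|\gg|A-A|$. Taking the maximum and factoring out $N$ gives
\[
M_2 \gg N\cdot\max\{|A\cdot A|,\,|A-A|\}^{1/2}.
\]
The sum--product theorem of Roche-Newton, Rudnev, and Shkredov \cite{RRS} provides $\max\{|A-A|,|A\cdot A|\}\gg N^{6/5}$ for $N\ll p^{5/8}$ (their argument runs verbatim with the difference set in place of the sumset), and substituting this in yields $M_2\gg N\cdot N^{3/5}=N^{8/5}$, as required.

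Finally, solving the recursion $M_{d+1}\gg N M_d^{1/2}$ from the base value $M_2\gg N^{8/5}$ produces $M_d\gg N^{2-1/(5\cdot 2^{d-3})}$ by a routine induction (consistent with the checks $M_3\gg N^{9/5}$, $M_4\gg N^{19/10}$). The only bookkeeping is the $\min\{\cdot,p\}$ truncation in Corollary \ref{cor:g(AxA)+C}: one verifies that the stated range $N\ll p^{\frac12+\frac{1}{5\cdot 2^{d-1}-2}}$ is exactly the threshold $N^{2-1/(5\cdot 2^{d-3})}\le p$ at which the polynomial term, not $p$, governs the bound at level $d$, so the truncation never activates before the final step.

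I expect the main point to be conceptual rather than computational, and to live entirely in the base case: recognizing that the ``distances-or-dot-products'' dichotomy is, at the bottom level, precisely a ``sums-or-products'' dichotomy, so that the RRS bound upgrades the trivial $N^{3/2}$ obtainable from either set alone to $N^{8/5}$. The remaining risks are purely in the details: maintaining the hypotheses $|A|\le|C|$ of Corollary \ref{cor:g(AxA)+C} and the characteristic condition at every level, and confirming that the difference-set version of the sum--product estimate holds with the same exponent and range.
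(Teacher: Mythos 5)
Your proposal is correct and takes essentially the same approach as the paper: the identical induction via Corollary \ref{cor:g(AxA)+C} applied to whichever of $\Delta(A^d)$, $\Pi(A^d)$ is larger, seeded by the Roche-Newton--Rudnev--Shkredov bound $\max\{|A-A|,|A\cdot A|\}\gg |A|^{6/5}$. The only cosmetic difference is that the paper starts the induction at $d=1$, where the claimed exponent $2-\tfrac{1}{5\cdot 2^{-2}}=\tfrac{6}{5}$ is exactly the sum--product bound, whereas you unroll one step further and take $d=2$ as the base case.
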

\begin{proof}
We prove the theorem by induction on $d$.
For $d=1$,
we have $|\Delta(A)| \gg |A-A|$,
so the statement follows from the sum-product bound 
\[ \max\left\{|A-A|,|A\cdot A|\right\}\gg |A|^{6/5},\]
which was proved 
in \cite{RRS} (also as a consequence of \cite{R}).
Assume that the claim holds for $d>1$.
If $|\Delta(A^d)|\ge |\Pi(A^d)|$,
then we set $g = (x-y)^2$ and  $C=\Delta(A^d)$, 
so that Corollary \ref{cor:g(AxA)+C} gives
\[|\Delta(A^{d+1})|
= |g(A\times A)+ \Delta(A^d)|
\gg |A|\left(|A|^{2-\frac{1}{5\cdot 2^{d-3}}}\right)^{1/2}
= |A|^{2-\frac{1}{5\cdot 2^{(d+1)-3}}}.\]
If $|\Pi(A^d)|\ge |\Delta(A^d)|$,
then we set $g = xy$ and  $C=\Pi(A^d)$,
and do the same calculation.
\end{proof}

An analogous bound for large subsets of finite fields was proved in \cite{V}.


\end{document}